\documentclass[11pt,letterpaper]{amsart}
\usepackage{fancyhdr}
\usepackage{bm}
\usepackage{hyperref}
\usepackage{graphicx} 
\usepackage{nicematrix}
\usepackage{amsthm,amssymb,amsmath}
\usepackage[T1]{fontenc}
\usepackage[utf8]{inputenc}
\usepackage{hyperref}
\usepackage{lipsum}
\usepackage{mathrsfs}
\usepackage{enumitem}
\usepackage{stmaryrd}
\usepackage{setspace}
\usepackage{yfonts}
\usepackage{float}
\usepackage{mathtools}
\usepackage{parskip}
\usepackage[all,cmtip]{xy}
\usepackage{tikz-cd}
\tikzcdset{row sep/normal=50pt, column sep/normal=50pt}

\newtheorem{lemma}{Lemma}[section]
\newtheorem{remark}[lemma]{Remark}

\newtheorem{theorem*}{Theorem}

\newtheorem{example*}[lemma]{Example}
\newtheorem{proposition}[lemma]{Proposition}
\newtheorem{corollary}[lemma]{Corollary}
\newtheorem{question}[lemma]{Question}
\newtheorem{claim}[lemma]{Claim}

\newtheorem{manualtheoreminner}{Theorem}
\newenvironment{manualtheorem}[1]{%
  \renewcommand\themanualtheoreminner{#1}%
  \manualtheoreminner
}{\endmanualtheoreminner}
\usepackage{blindtext}
\usepackage[margin=1.1in]{geometry}
\usepackage[backend=biber, style=numeric, sorting=nyt]{biblatex}
\title{Stable and unstable syzygy bundles on certain Picard rank one varieties}
\author{\small{Supravat Sarkar}}
\date{}
\begin{document}

\begin{abstract}
   If $X$ is a smooth projective variety of dimension $\geq 2$ and Picard rank $1$ on which every ample line bundle has at least $2$ linearly independent sections, we prove that syzygy bundle of any nontrivial globally generated line bundle is stable. We apply this to show stability of syzygy bundles in non-general type complete intersections in weighted projective spaces, and all complete intersections in projective spaces. This extends earlier results of Jiang-Ren, Coand{\u{a}} and others. We also show that in any dimension $\geq 2$, there is a smooth projective variety of Picard rank $1$ with an unstable syzygy bundle. This completely answers a question of Fulger-Langer.
\end{abstract}
\maketitle
\begin{center}
\textbf{Keywords}: syzygy bundle, stability, complete intersection 
\end{center}
\begin{center}
\textbf{MSC Number: 14H60, 14C20} 
\end{center}

\section{Introduction}
We work throughout over an algebraically closed field $k$ of characteristic $0$. Given a nontrivial globally generated line bundle $L$ on a smooth projective variety $X$, one defines the syzygy bundle $M_L$ to be the kernel of the evaluation map $H^0(X,L)\otimes \mathcal{O}_X\to L.$ It has been an important area of research to decide whether $M_L$ is stable with respect to a fixed polarization. In \cite{ein2013stability}, it was proved that $M_L$ is stable whenever $L$ is sufficiently ample in some sense. 

The problem is more interesting when $X$ has Picard rank $1$, as then the choice of the polarization does not matter. Consider first the case where $X$ is a curve of genus $g$. In this case many results are known. For $g=0$, clearly $M_L$ is never stable whenever $\deg L\geq 2$, as $M_L$ always splits.  For $g\geq 1$, \cite{ein1992stability} proves that $M_L$ is stable whenever $\deg L\geq 2g+1$. This bound was improved in \cite{camere2008stability}, in terms of Clifford index of $X$. \cite{camere2008stability} also shows that there are examples where $M_L$ is unstable. 

On the other hand, when $\dim X\geq 2$ and $X$ has Picard rank $1$, there is no known example where $M_L$ is not stable. So a natural question is the following:
\begin{question}\label{syzygy bundle}
    Let $X$ be a smooth projective variety of Picard rank $1$ and dimension $\geq 2$, and $L$ a nontrivial globally generated line bundle on $X$. Is the syzygy bundle $M_L$ stable?
\end{question}
It appeared in a slightly different form in \cite[Question 2]{fulger2022positivity}. In \cite{ein2013stability}, Ein, Lazarsfield and Mustopa conjectured an affirmative answer to this question when $L$ is sufficiently ample, and this conjecture was proved by Rekuski in \cite{MR4728292}. Semistability of syzygy bundles on projective space was proved in several places, for example, \cite[Corollary 2.2]{flenner1984restrictions}, \cite[Corollary 6.5]{ballico1994restriction} and \cite[Corollary 7.1]{brenner2008looking}. Affirmative answers to Question \ref{syzygy bundle} in several cases are known by \cite{camere2012stability},\cite{mukherjee2022note}, \cite{caucci2021stability}, \cite{jiang2025stability} and other works.

One of the goals of this paper is to give an affirmative answer to Question \ref{syzygy bundle} for a large class of varieties $X$. Our first result is the following.

\begin{manualtheorem}{A}\label{A}
Let $X$ be a smooth projective variety of dimension $\geq 2$ and Picard rank $1$.
Suppose for all ample line bundle $H$ on $X$ we have $h^0(X,H)\geq 2$.
Then for every nontrivial globally generated line bundle $L$ on $X$, the syzygy bundle $M_L$ is stable.
\end{manualtheorem}

As an application of this Theorem, we give affirmative answers to Question \ref{syzygy bundle} when $X$ is a complete intersection in projective space, or a non-general type complete intersection in a weighted projective space $\mathbb{P}$. For projective space, this was known before only for $X$ Fano or Calabi-Yau, see \cite{jiang2025stability}. 

When $\mathbb{P}=\mathbb{P}(a_0, a_1,\cdots, a_n)$ is a weighted projective space, which we may assume is well-formed, and $X$ is a smooth complete intersection in $\mathbb{P}$ of multidegree $(d_1, d_2,\cdots d_c)$, we will make two standard assumptions for $X$ (see \cite{pizzato2017effective}): $X$ is well-formed, that is, $\operatorname{codim}_X\!\left(X\cap \operatorname{Sing}(\mathbb{P})\right)\ge 2,$ and $X$ is not a linear cone, that is, $d_j\neq a_i$ for all $i$ and $j$. When $\mathbb{P}$ is a projective space, the first one always holds, and without loss of generality we can make the second asssumption in purpose of the following theorem, as we can replace $\mathbb{P}$ by the linear span of $X$.
\begin{manualtheorem}{B}\label{B}
Suppose $X$ is a smooth projective variety of dimension $\geq 2$ and
Picard rank $1$, which is a well-formed complete intersection in a weighted projective
space $\mathbb P$, and is not an intersection with a linear cone.
Then $M_L$ is stable for every nontrivial globally generated line bundle $L$ if one
of the following holds:
\begin{enumerate}
\item[(a)] $\mathbb P$ is a projective space.
\item[(b)] $X$ is not of general type.
\end{enumerate}
\end{manualtheorem}

Finally, we show that there are smooth projective varieties $X$ in every dimension $\geq 2$ with a negative answer to Question \ref{syzygy bundle}. Thus, it completely answers \cite[Question 2(1)]{fulger2022positivity} (see Remark \ref{answer}). This also shows that Theorem \ref{B} does not hold for complete intersections in weighted projective spaces if one drops the assumption $(b)$. 
\begin{manualtheorem}{C}\label{C}
Let $X$ be a smooth projective variety of dimension $n\ge2$, and
$\mathcal O_X(1)$ an ample line bundle on $X$ with $\operatorname{Pic}(X)=\mathbb Z\cdot \mathcal O_X(1).$
Suppose $0<k<l$ are integers such that the following hold:

\begin{enumerate}
\item[(i)] $\mathcal O_X(l)$ is globally generated and $h^0(X,\mathcal O_X(l))
<
1+\frac{l}{k}.$

\item[(ii)] The natural map
\[
H^0(X,\mathcal O_X(k))
\otimes
H^0(X,\mathcal O_X(l))
\xlongrightarrow{\eta}
H^0(X,\mathcal O_X(k+l))
\]
is not injective.
\end{enumerate}

Then the syzygy bundle $M_{\mathcal O_X(l)}$ is unstable.

Furthermore, for every $n\ge2$, there is such a triple
$(X,k,l)$ with $X$ a complete intersection in a weighted projective
space.
\end{manualtheorem}
\section{Notation and conventions}
\begin{itemize}
    \item A \textit{variety} is an integral separated $k$-scheme of finite type.
    \item We use the convention of graded rings and modules as in \cite[Section I.2]{hartshorne2013algebraic}. For a graded $k$-algebra $R$ and a graded $R$-module $M=\bigoplus_{j}M_j$ with each $M_j$ finite dimensional over $k$ and $M_j=0$ for all $j<<0$, we define the Hilbert polynomial
    $$P_M(T):=\sum_j(\dim_k M_j)T^j,$$ an element of the ring of Laurent series $\mathbb{Z}((T))$.
    \item For an ample line bundle $H$ on a smooth projective variety $X$, and a nonzero vector bundle $E$ on $X$, $\mu_H(E)$ denotes the slope of $E$ with respect to the polarization $H$. For definitions and more details about slope and stability, see \cite[Chapter 5]{kobayashi2014differential}.
    \item For a normal projective variety $X$, the
Néron--Severi group of $X$ is denoted by $NS(X).$ 
\item If $\mathcal{O}_X(1)$ is a fixed ample line bundle on a projective variety $X$, and $F$ is a coherent sheaf on $X$, we will denote $F\otimes \mathcal{O}_X(1)^{\otimes k}$ as $F(k)$ for all integers $k$.
\item For a line bundle $L$ on a projective variety $X$, we denote the complete linear system given corresponding to $L$ by $|L|$.
\item For a nonzero element $x$ in a free abelian group $A$ of finite rank, the \textit{divisibility} of $x$ is the largest positive integer $n$ such that $x=ny$ for some $y\in A$.
\end{itemize}
\section{A monotonicity result and proof of Theorem \ref{A}}
The goal of this section is to prove Theorem \ref{A}. A key step is Proposition \ref{monotone}, which is interesting in its own right. We first prove the following standard Lemma.
\begin{lemma}\label{Hilbert}
Let $R=\bigoplus_{j\ge0}R_j$ be a graded $k$-algebra, $M=\bigoplus_{j}M_j$ a graded $R$-module, with each $M_j$ finite dimensional over $k$, and
$s\in R_1$ be a nonzero divisor in $M$. Let $N=M/sM$. Then
\[
P_M(T)=(1-T)^{-1}P_N(T).
\]
\end{lemma}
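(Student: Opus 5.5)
The plan is to use the short exact sequence of graded $R$-modules given by multiplication by $s$. Since $s\in R_1$ is homogeneous of degree $1$, multiplication by $s$ sends $M_{j-1}$ into $M_j$. We obtain a degree-preserving map $M(-1)\to M$, where $M(-1)_j=M_{j-1}$ as in the grading conventions of \cite[Section I.2]{hartshorne2013algebraic}. Because $s$ is a nonzero divisor on $M$, this map is injective, and its cokernel is $N=M/sM$. So for each $j$ there is an exact sequence of finite-dimensional $k$-vector spaces
\[
0\to M_{j-1}\xrightarrow{\cdot s} M_j\to N_j\to 0,
\]
and hence
\[
\dim_k N_j=\dim_k M_j-\dim_k M_{j-1}.
\]
Each $N_j$ is finite dimensional, being a quotient of $M_j$. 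Moreover $N_j=0$ for $j\ll 0$ whenever $M_j=0$ for $j\ll0$, which is the standing convention needed for $P_M$ to lie in $\mathbb{Z}((T))$.

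Next, I multiply by $T^j$ and sum over $j$. The left side gives $P_N(T)$. The right side gives $P_M(T)-T\,P_M(T)=(1-T)P_M(T)$, using that $\sum_j (\dim_k M_{j-1})T^j=T\sum_j(\dim_k M_j)T^j$ as Laurent series. This termwise manipulation is legitimate because all the series involved have support bounded below, so they are genuine elements of $\mathbb{Z}((T))$. Thus $(1-T)P_M(T)=P_N(T)$.

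Finally, $1-T$ is invertible in $\mathbb{Z}((T))$, with inverse $\sum_{i\ge0}T^i$, so $P_M(T)=(1-T)^{-1}P_N(T)$. The only point needing care is checking that the sums are well defined in $\mathbb{Z}((T))$, that is, that $M_j=0$ for $j\ll0$; this is part of the paper's definition of the Hilbert series. I expect no real obstacle, since the statement is essentially the additivity of dimension along the multiplication-by-$s$ sequence.
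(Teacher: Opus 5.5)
Your proposal is correct and matches the paper's proof: the degreewise exact sequence $0\to M_{j-1}\xrightarrow{\cdot s}M_j\to N_j\to 0$ gives $\dim N_j=\dim M_j-\dim M_{j-1}$, and summing against $T^j$ yields $(1-T)P_M(T)=P_N(T)$. Your added check that these manipulations make sense in $\mathbb{Z}((T))$, because $M_j=0$ for $j\ll 0$ (part of the paper's definition of $P_M$), is a point the paper leaves implicit.
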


\begin{proof}
For every $j\in\mathbb{Z}$, we have an exact sequence of $k$-vector spaces
\[
0\longrightarrow M_{j-1}
\xrightarrow{\cdot s}
M_j
\longrightarrow N_j
\longrightarrow0.
\]
Hence
\[
\dim N_j=\dim M_j-\dim M_{j-1}.
\]
Therefore,
\begin{align*}
P_N(T)
&=\sum_j(\dim N_j)\,T^j \\
&=\sum_j(\dim M_j)\,T^j-\sum_j(\dim M_{j-1})\,T^j \\
&=\sum_j(\dim M_j)\,T^j
-T\sum_j(\dim M_{j-1})\,T^{j-1} \\
&=(1-T)\sum_j(\dim M_j)\,T^j 
=(1-T)P_M(T).
\end{align*}
Thus
\[
P_M(T)=(1-T)^{-1}P_N(T).
\]
\end{proof}
\begin{proposition}\label{monotone}
Let $X$ be a normal projective variety of dimension $\geq 2$,
$\mathcal{O}_X(1)$ an ample line bundle on $X$ such that the base locus of $|\mathcal{O}_X(1)|$ has codimension $\geq 2$ in $X$, and $F$ a numerically trivial line bundle on $X$.
Then for all $k\geq 1$ we have
$$\frac{h^0(X,F(k))-1}{k}\leq \frac{h^0(X,F(k+1))-1}{k+1}.$$
Further, if equality holds for some $k$, then $F=\mathcal{O}_X$ and $\mathcal{O}_X(k+1)$ is not globally generated.
\end{proposition}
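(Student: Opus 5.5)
We work with the section module $M=\bigoplus_{j\ge 0}H^0(X,F(j))$ over the section ring $R=\bigoplus_{j\ge0}H^0(X,\mathcal{O}_X(j))$, and reduce everything to a monotonicity statement about the graded pieces of $M/sM$ for a suitable $s\in R_1$. Fix a nonzero $s\in H^0(X,\mathcal{O}_X(1))$ with divisor $D$. Since $X$ is integral, $s$ is a nonzero divisor on $M$. Put $N=M/sM$ and $n_j=\dim N_j$. As in the proof of Lemma \ref{Hilbert}, $n_j=h^0(F(j))-h^0(F(j-1))$, with $M_{-1}=0$. Hence
\[
h^0(F(k))-1=(n_0-1)+\sum_{j=1}^k n_j,
\]
where $n_0=h^0(F)$. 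Since $F$ is numerically trivial, $h^0(F)\le 1$, with equality iff $F\cong\mathcal{O}_X$. So $n_0-1\le 0$.

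\textbf{Step 1: the key claim.} I claim that $n_0\le n_1\le n_2\le\cdots$. For this I want $t\in R_1$ such that multiplication by $t$ is injective $N_j\to N_{j+1}$ for every $j$.
\begin{itemize}
\item Since $X$ is normal and $D$ is Cartier, $\mathcal{O}_D$ is $S_1$. So $\mathcal{O}_D$ has no embedded points, and its associated points are the generic points of the components of $D$.
\item Because the base locus of $|\mathcal{O}_X(1)|$ has codimension $\ge2$, no component of $D$ lies in it. Hence a general $t$ is nonvanishing at each of these finitely many generic points, and $t$ is a nonzero divisor on $F(j)|_D$.
\item Now suppose $m\in M_j$ and $tm\in sM$. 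Then $(tm)|_D=0$, so $m|_D=0$ in $H^0(D,F(j)|_D)$. Therefore $m\in H^0(F(j-1))\cdot s$, i.e.\ $m\in sM$.
\end{itemize}
This gives injectivity of $t\colon N_j\to N_{j+1}$, and hence the claim. I expect this step to be the main obstacle: controlling the associated points of the possibly nonreduced $D$ is exactly where normality and the base locus hypothesis are used.

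\textbf{Step 2: the inequality.} The desired inequality is equivalent to $(k+1)(h^0(F(k))-1)\le k(h^0(F(k+1))-1)$, that is,
\[
(n_0-1)+\sum_{j=1}^k n_j\le k\,n_{k+1}.
\]
This holds because $n_0-1\le0$ and $n_j\le n_{k+1}$ for $1\le j\le k$ by Step 1.

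\textbf{Step 3: the equality case.} Equality forces $n_0=1$, so $F\cong\mathcal{O}_X$. It also forces $n_k=n_{k+1}$. Then the injection $t\colon N_k\to N_{k+1}$ is an isomorphism, so
\[
H^0(\mathcal{O}_X(k+1))=s\,H^0(\mathcal{O}_X(k))+t\,H^0(\mathcal{O}_X(k)).
\]
Consequently every section of $\mathcal{O}_X(k+1)$ vanishes on $\{s=t=0\}$. This locus is nonempty, being the intersection of two ample divisors on a variety of dimension $\ge2$. Hence $\mathcal{O}_X(k+1)$ is not globally generated.
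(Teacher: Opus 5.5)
Your proof is correct and is essentially the paper's argument. Injectivity of $t$ on $M/sM$ is exactly the paper's claim that $(s,t)$ is $M$-regular; you derive it from the $S_2$ property via associated points of $\mathcal{O}_D$, while the paper compares Weil divisors $D_t+D_f=D_s+D_g$. Your $n_j$ are the partial sums of the paper's $a_j=\dim (M/(s,t)M)_j$, so Step 2 is the paper's Hilbert-series computation in different bookkeeping. Your Step 3 also handles the equality case in degree $k+1$, which is what the statement asks for, whereas the paper's Claim is phrased only for $\mathcal{O}_X(1)$ globally generated and needs that same degree-$(k+1)$ adjustment.
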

\begin{proof}
For $j\in \mathbb{Z}$, let $R_j=H^0(X,\mathcal{O}_X(k))$ and $M_j=H^0(X,F(j))$. So, $R_j=M_j=0$ for $j<0$, and $F$ is numerically trivial. Let $R=\bigoplus_{j\ge0}R_j$, $M=\bigoplus_{j\ge0}M_j$. So, $R$ is a graded $k$-algebra and $M$ a graded $R$-module with each $M_j$ finite dimensional over $k$. Let $s,t\in R_1$ be general.

\begin{claim}\label{regular sequence}
 $(s,t)$ is an $M$-regular sequence.
\end{claim}
\begin{proof}
Clearly $s$ is a nonzero divisor in $M$. We need to show the image of $t$ is a nonzero divisor in $M/sM$. Equivalently, if $j\geq 0$ and $
tf=sg$
for some nonzero $f,g\in M_j$, we
need to show that $f\in sM$.

Let $D_s,\ D_t,\ D_f$ and $\ D_g$ denote the
effective Cartier divisors defined by the sections $s,t,f$ and $g$,
respectively. So
\begin{equation}\label{D}
D_t+D_f=D_s+D_g.
\end{equation}

 As $s$ and $t$ are general and the base locus of $|\mathcal{O}_X(1)|$ has codimension $\geq 2$ in $X$, we see that
$D_s$ and $D_t$ have no common irreducible component. So by \eqref{D} we have $D_s\le D_f,$
hence the rational section $\frac{f}{s}$
of $F(j-1)$ has no poles. Since $X$ is normal, $
\frac{f}{s}$ must be a regular section,
so $f\in sM$.
\end{proof}
Let $N =\frac{M}{(s,t)M},$
a graded $R$-module. Write $a_j=\dim N_j,$ for $j\ge0.$
\begin{claim}\label{2}
If $F=\mathcal{O}_X$ and $\mathcal{O}_X(1)$ is globally generated, then we have $a_j\ge 1$ for all $j\ge 0$.
\end{claim}
\begin{proof}
Suppose $F=\mathcal{O}_X$, so $M=R$. As $\dim X\ge2$, and $\mathcal{O}_X(1)$ is ample, there is $p\in X$ where both $s$ and $t$
vanish. If $\mathcal{O}_X(1)$ is globally generated, there is $u\in H^0(X,\mathcal{O}_X(1))$ not vanishing at $p$. Then $u^j\in R_j$, but not in the ideal $(s,t)$. So, $N_j\neq 0$ for all $j\geq 0$.
\end{proof}

 Using Claim \ref{regular sequence} and repeatedly applying Lemma \ref{Hilbert}
we have
\[
P_M(T)=(1-T)^{-2}P_N(T).
\]
As
\[
(1-T)^{-2}
=\sum_{i=0}^{\infty}(i+1)T^i,
\]
we have
\begin{equation}\label{P_R}
P_M(T)
=
\left(\sum_{j=0}^{\infty}a_jT^j\right)
\left(\sum_{i=0}^{\infty}(i+1)T^i\right).
\end{equation}

Equating the coefficients of $T^k$ in both sides of \eqref{P_R}, we get
\[
h^0(X,F(k))
=
\sum_{j=0}^{k}a_j(k-j+1).
\]
So, for $k\ge 1$ we have
\begin{align*}
&\frac{h^0(X,F(k+1))-1}{k+1}
-\frac{h^0(X,F(k))-1}{k} \\
&=
\sum_{j=0}^{k+1}\left(1-\frac{j-1}{k+1}\right)a_j
-
\sum_{j=0}^{k}\left(1-\frac{j-1}{k}\right)a_j +\frac{1}{k(k+1)}\\
&=
\frac{1-a_0}{k(k+1)}
+\frac{1}{k(k+1)}
\sum_{j=1}^{k}(j-1)a_j
+\frac{a_{k+1}}{k+1} \\
&=
\frac{1}{k(k+1)}
\left(
\sum_{j=2}^{k}(j-1)a_j
+(ka_{k+1}+1-a_0)
\right).
\end{align*}
If $F$ is non-trivial, then $a_0=h^0(X,F)=0$ as $F\equiv0$, so the last expression is $> 0$. If $F$ is trivial, then $a_0=1$ and the last expression is $\geq 0$, and it is $>0$ if further $\mathcal{O}_X(1)$ is globally generated,  by Claim \ref{2}.
\end{proof}
\begin{corollary}\label{key}
Let $H$ be an ample line bundle on a normal projective variety $X$ of
dimension $\geq 2$, and assume that for every line bundle $H_1\equiv H$, the base locus of $|H_1|$ has codimension $\geq 2$ in $X$. If $k>0$ is an integer and
$N_1,N_2\equiv0$ are line bundles on $X$, then
\begin{equation}\label{key1}
\frac{h^0(X,N_1\otimes H^k)-1}{k}
\le
\frac{h^0(X,N_2\otimes H^{k+1})-1}{k+1},
\end{equation} and the inequality is strict if $N_2\otimes H^{k+1}$ is globally generated.
\end{corollary}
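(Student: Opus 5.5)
The plan is to obtain the Corollary directly from Proposition \ref{monotone} by choosing the ample line bundle $\mathcal{O}_X(1)$ and the numerically trivial bundle $F$ so that $F(k)$ and $F(k+1)$ are exactly the two bundles in \eqref{key1}.

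\textbf{Choice of $\mathcal{O}_X(1)$ and $F$.} Define
\[
\mathcal{O}_X(1):=N_2\otimes N_1^{-1}\otimes H,
\qquad
F:=N_1^{\otimes (k+1)}\otimes N_2^{\otimes(-k)}.
\]

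\textbf{Checking the hypotheses of Proposition \ref{monotone}.}
\begin{itemize}
\item Since $N_1,N_2\equiv 0$, we have $\mathcal{O}_X(1)\equiv H$.
\item Ampleness is a numerical property, so $\mathcal{O}_X(1)$ is ample.
\item Applying the hypothesis with $H_1=\mathcal{O}_X(1)$ shows that the base locus of $|\mathcal{O}_X(1)|$ has codimension $\geq 2$.
\item $F$ is numerically trivial, being a tensor product of powers of numerically trivial bundles.
\item $X$ is normal projective of dimension $\geq 2$ by assumption.
\end{itemize}
So all hypotheses of Proposition \ref{monotone} hold for the pair $(\mathcal{O}_X(1),F)$.

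\textbf{Identifying the twists.} A direct computation gives
\[
F(k)=N_1^{k+1}N_2^{-k}\otimes N_2^{k}N_1^{-k}H^{k}=N_1\otimes H^k,
\]
\[
F(k+1)=N_1^{k+1}N_2^{-k}\otimes N_2^{k+1}N_1^{-k-1}H^{k+1}=N_2\otimes H^{k+1}.
\]
The inequality of Proposition \ref{monotone} for this $k\geq 1$ is therefore exactly \eqref{key1}.

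\textbf{Strictness.} Suppose equality holds in \eqref{key1}. By the second part of Proposition \ref{monotone}, $F\cong\mathcal{O}_X$ and $\mathcal{O}_X(k+1)$ is not globally generated. But when $F$ is trivial, $\mathcal{O}_X(k+1)=F(k+1)=N_2\otimes H^{k+1}$. Hence if $N_2\otimes H^{k+1}$ is globally generated, equality is impossible and the inequality is strict.

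The only step requiring any care is finding $\mathcal{O}_X(1)$ and $F$ with $F(k)$ and $F(k+1)$ equal to the two prescribed bundles. This is forced: the quotient $F(k+1)\otimes F(k)^{-1}$ must be $\mathcal{O}_X(1)$, and then $F=(N_1\otimes H^k)\otimes\mathcal{O}_X(-k)$. It is also why the hypothesis is imposed for every $H_1\equiv H$ rather than just for $H$: the chosen $\mathcal{O}_X(1)$ is only numerically equivalent to $H$.
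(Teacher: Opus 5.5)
Your proposal is correct and matches the paper's proof: the paper also sets $H_1=H\otimes N_2\otimes N_1^{-1}$ and $F=N_1^{k+1}\otimes N_2^{-k}$, checks that $F\otimes H_1^{k}\cong N_1\otimes H^k$ and $F\otimes H_1^{k+1}\cong N_2\otimes H^{k+1}$, and applies Proposition \ref{monotone}. Your strictness argument spells out a step the paper leaves implicit: equality forces $F\cong\mathcal{O}_X$, and then $\mathcal{O}_X(k+1)=N_2\otimes H^{k+1}$, which contradicts the global generation hypothesis.
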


\begin{proof}
Let
\[
H_1=H\otimes N_2\otimes N_1^{-1},
\qquad
F=N_1^{k+1}\otimes N_2^{-k}.
\]
So, $H_1\equiv H$ is ample and the base locus of $|H_1|$ has codimension $\geq 2$ in $X$ by assumption. Also $F\equiv0.$
Observe that
\[
F\otimes H_1^k
\cong
N_1\otimes H^k,
\qquad
F\otimes H_1^{k+1}
\cong
N_2\otimes H^{k+1}.
\]
Now we are done by Lemma \ref{monotone}.
\end{proof}
We will need the following Lemma in the proof of Theorem \ref{A}, which was essentially proved in \cite{jiang2025stability}.
\begin{lemma}\label{jr main}
Let $X$ be a smooth projective variety of dimension $\geq 2$ and Picard rank $1$, and $\mathcal O_X(1)$ is an ample line bundle on $X$ whose class generates
$NS(X)/\mathrm{torsion}\cong\mathbb Z$. Let $N_2\equiv0$ be a line bundle on $X$, $l>0$ an
integer, such that $L:=N_2(l)$
is globally generated.
Suppose further that for all integer $k$ with $0<k<l$ and all line bundles
$N_1\equiv0$, we have
\begin{equation}\label{jr}
\frac{h^0(X,N_1(k))-1}{k}
<
\frac{h^0(X,N_2(l))-1}{l}.
\end{equation}

Then $M_L$ is stable.
\end{lemma}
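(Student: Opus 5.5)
We follow the standard Butler / Ein--Lazarsfeld / Jiang--Ren argument: destabilizing subsheaves of $M_L$ are controlled by subspaces of $H^0(X,L)$. Write $V=H^0(X,L)$, $r=\operatorname{rk}M_L=h^0(X,L)-1$, and $c_1(M_L)=-c_1(L)\equiv -l\,\mathcal{O}_X(1)$. Since the Picard rank is $1$, stability can be tested with respect to $\mathcal{O}_X(1)$. Then
\[\mu(M_L)=-\frac{l}{h^0(X,L)-1}\,\mathcal{O}_X(1)^{n}.\]
Suppose $M_L$ is not stable. Take a saturated subsheaf $0\neq F\subsetneq M_L$ with $\mu(F)\geq\mu(M_L)$, and pass to its dual. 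Since $M_L\subset V\otimes\mathcal{O}_X$, dualizing gives a generically surjective map $V^*\otimes\mathcal{O}_X\to M_L^*\to F^*$. Hence $F^*$ is generically globally generated by the image $W$ of $V^*$ in $H^0(F^*)$.

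\textbf{Step 1: the determinant of the quotient.} Let $s=\operatorname{rk}F$ and $\det F^*=N_1(k)$ with $N_1\equiv0$, so $\mu(F)\geq\mu(M_L)$ reads $\frac{k}{s}\leq\frac{l}{h^0(L)-1}$. Because $F^*$ is generically generated by $W$, its determinant is effective and has nontrivial sections coming from $\wedge^s W$. So $k\geq0$. Moreover $k=0$ would force $F^*$ to be trivial, giving a trivial quotient of $M_L^*$. That contradicts $H^0(M_L)=0$, which follows from the definition of $M_L$ as the kernel of $V\otimes\mathcal{O}_X\to L$ together with the injectivity of $V\to H^0(L)$ (duality care is needed here). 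Thus $k>0$.

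\textbf{Step 2: a section count.} Following Jiang--Ren, I would show $h^0(N_1(k))\geq s+1$. The image of $\wedge^s W\to H^0(\det F^*)$ should span a space of dimension at least $s+1$ whenever $s<r$. The argument goes through the saturated subsheaf: $F\subset M_L$ corresponds to a subspace of $V$ via the kernel of $V\otimes\mathcal{O}\to F^*$, and $\dim W\geq s+1$ gives at least $s+1$ independent minors. This standard step is what I expect to be ``essentially proved'' in Jiang--Ren, so I would cite it. Combining with Step 1 gives
\[\frac{h^0(N_1(k))-1}{k}\geq\frac{s}{k}\geq\frac{h^0(L)-1}{l}.\]

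\textbf{Step 3: bounding $k$ and concluding.} If $k<l$, the last display contradicts the hypothesis \eqref{jr}. If $k\geq l$, then $s\geq k\,\frac{h^0(L)-1}{l}\geq h^0(L)-1=r$, contradicting $s<r$. Hence $M_L$ is stable.

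The main obstacle is Step 2: showing that the generic generation of $F^*$ by $W$ with $\dim W\geq s+1$ produces at least $s+1$ independent sections of $\det F^*$. The case $\dim W=s$ must be excluded because it would make $F^*$ trivial. If the minors span a space of dimension at most $s$, I would try to show they come from a fixed pencil-type degeneracy and derive a contradiction with the saturation of $F$.
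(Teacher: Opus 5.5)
Your outline is the argument the paper imports from Jiang--Ren: take a saturated destabilizing $F\subset M_L$ of rank $s<r=h^0(L)-1$, write $\det F^*=N_1(k)$, turn $\mu(F)\ge\mu(M_L)$ into $kr\le ls$, rule out $k\ge l$ (it forces $s\ge r$), and for $0<k<l$ contradict \eqref{jr} using $h^0(\det F^*)\ge s+1$. Steps 1 and 3 are fine.

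The gap is Step 2, which carries all the content of the lemma. You do not prove it, and the reason you give is false. Generic generation of $F^*$ by $W$, together with $\dim W\ge s+1$ and $H^0(F)=0$, does not yield $s+1$ independent minors. Here is a counterexample:
\begin{itemize}
\item Take nonzero effective divisors $D_1,D_2$ on $X$ with $h^0(\mathcal O_X(D_2))\ge2$.
\item Set $E=\mathcal O_X(D_1)\oplus\mathcal O_X(D_2)$ and $W=\langle\sigma\rangle\oplus W_2$, with $0\ne\sigma\in H^0(\mathcal O_X(D_1))$ and $W_2$ a pencil in $H^0(\mathcal O_X(D_2))$.
\item Then $E$ has rank $2$, is generically generated by the $3$-dimensional $W$, and satisfies $H^0(E^*)=0$.
\item Yet the image of $\wedge^2W$ in $H^0(\det E)$ is $\sigma\cdot W_2$, which has dimension $2$.
\end{itemize}
So the count you sketch cannot work, and the ``pencil-type degeneracy'' plan has no mechanism behind it. The condition $s<r$ is also beside the point.

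Two ingredients are missing. First, saturation must be used to make $F^*$ generated by $W$ off a codimension-two set, not merely generically; equivalently, it makes $(V\otimes\mathcal O_X)/F$ torsion-free. Second, you need $H^0(F)\subseteq H^0(M_L)=0$.

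The cleanest way to use them is to work with $G=(V\otimes\mathcal O_X)/F$ instead of $F^*$:
\begin{itemize}
\item $G$ is torsion-free, being an extension of $L$ by the torsion-free sheaf $M_L/F$.
\item $G$ is globally generated by $V$, has rank $g=r+1-s$, and $\det G\cong\det F^*$.
\item The map $V\to H^0(G)$ is injective, since its kernel is $H^0(F)=0$.
\end{itemize}
Now pick general $v_1,\dots,v_{g-1}\in V$. Off a closed set of codimension $\ge2$, $G$ is locally free and the $v_i$ span a trivial rank-$(g-1)$ subbundle whose quotient is isomorphic to $\det G$. On that big open set $U'$, global functions are constants. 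Since $V\to H^0(U',G)$ is injective ($G$ is torsion-free), the map $V\to H^0(X,\det G)$ given by $v\mapsto v_1\wedge\cdots\wedge v_{g-1}\wedge v$ has kernel exactly $\langle v_1,\dots,v_{g-1}\rangle$. Hence $h^0(X,N_1(k))=h^0(\det G)\ge (r+1)-(g-1)=s+1$, which is exactly what your Steps 1 and 3 need.
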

\begin{proof}
    The same proof as \cite[Theorem 1.1]{jiang2025stability} works.
\end{proof}
Now we are ready to prove Theorem \ref{A}.

\textit{Proof of Theorem \ref{A}:} 
Let $H=\mathcal O_X(1)$ be an ample line bundle on $X$ whose class generates
$NS(X)/\mathrm{torsion}\cong\mathbb Z$. Recall that the kernel of the natural surjection $\operatorname{Pic}(X)\longrightarrow NS(X)/\mathrm{torsion}$
is the group of numerically trivial line bundles (up to isomorphism).
So, there is $l\ge1$ and a line bundle
$N_2\equiv0$ such that $L\cong N_2(l).$

If $H_1\equiv H$ is any line bundle, then the class of $H_1$ also generates
$NS(X)/\mathrm{torsion}$, hence $H_1$ is not linearly equivalent to sum of two nonzero effective divisors. This shows that any element in $|H_1|$ is irreducible. Since $H_1$ is ample, $|H_1|$ is not singleton by assumption. Hence the base locus of $|H_1|$ has codimension $\geq 2$ in $X$. 

Hence by repeated application of Lemma \ref{key}, the inequality \eqref{jr} is satisfied for all $0<k<l$. Now Theorem \ref{A} follows from Lemma \ref{jr main}.
\section{Stability of syzygy bundles on complete intersections}
The goal of this section is to prove Theorem \ref{B}.

\textit{Proof of Theorem \ref{B}:}
If $X$ is a surface not of general type, then we are done by
\cite[Theorem 1.4]{jiang2025stability}.
So assume either $\dim X\ge3$ or $X$ is a surface of general type.
By Theorem \ref{A}, it suffices to show that:
\[
\operatorname{Pic}(X)=\mathbb Z\cdot\mathcal O_X(1)
\quad\text{and}\quad
h^0(X,\mathcal O_X(1))\ge2.
\]

Suppose \emph{(a)} holds. Then $h^0(X,\mathcal O_X(1))\ge2,$
as $\mathcal O_X(1)$ is very ample and $\dim X\ge2$. If $\dim X\ge3$, then $\operatorname{Pic}(X)=\mathbb Z\cdot\mathcal O_X(1)$
by the Grothendieck--Lefschetz theorem. If $\dim X=2$, then $X=Y\cap H$ in $\mathbb P,$
where $Y$ is a smooth $3$-dimensional complete intersection and $H$ is a
hypersurface in $\mathbb P$. By the Grothendieck--Lefschetz theorem (see \cite[Section 3.1]{lazarsfeld2017positivity}), $\operatorname{Pic}(Y)=\mathbb Z\cdot\mathcal O_Y(1).$
Note that $\mathcal O_Y(K_Y+X)\cong\mathcal O_Y(m)$
for some integer $m$, and by adjunction, $\omega_X\cong \mathcal O_X(m).$
As $X$ is of general type, we have $m>0$, so
$\mathcal O_Y(K_Y+X)$ is very ample. By
\cite[Theorem 1]{ravindra2009noether}, a very general element
$X'$ of $|\mathcal O_Y(X)|$ has $\operatorname{Pic}(X')=\mathbb Z\cdot\mathcal O_{X'}(1).$

The smooth members $X'$ of $|\mathcal O_Y(X)|$ form a smooth proper family, and
the line bundles $\mathcal O_{X'}(1)$ give a line bundle on the family.
The family is topologically locally trivial by Ehresmann's fibration theorem (\cite{ehresmann1950connexions}),
and by Lefschetz hyperplane theorem (\cite[Theorem 3.1.21]{lazarsfeld2017positivity}) all $X'$ in the family are simply connected.
Hence $\operatorname{Pic}(X')$ is free abelian. Therefore, the divisibility of
$\mathcal O_{X'}(1)$ in $\operatorname{Pic}(X')$ is constant in the family.
As for very general $X'$ this divisibility is $1$, we get $\operatorname{Pic}(X')=\mathbb Z\cdot\mathcal O_{X'}(1)$
for every member of the family, in particular for $X'=X$.

Now suppose \emph{(b)} holds. So $\dim X\ge3$ by our assumption in the
beginning of the proof. Let $\mathbb P=\mathbb P(a_0,a_1,\ldots,a_n),$
which we may assume is well formed.

By \cite[Proposition 2.3]{pizzato2017effective}, $\operatorname{Pic}(X)=\mathbb Z\cdot\mathcal O_X(1).$ Since $X$ is not a Calabi--Yau surface,
\cite[Corollary 5.3(i)]{pizzato2017effective}
shows that at least two of the weights $a_i$ are equal to $1$.
Therefore, $h^0(\mathbb P,\mathcal O_{\mathbb P}(1))\ge2.$ As $X$ is not an intersection with a linear cone, the homogeneous ideal of $X$
contains no homogeneous polynomial of degree $1$. Hence the natural restriction
map
\[
H^0(\mathbb P,\mathcal O_{\mathbb P}(1))
\longrightarrow
H^0(X,\mathcal O_X(1))
\]
is injective. So, $h^0(X,\mathcal O_X(1))\ge2.$
\section{Existence of unstable syzygy bundles}
The goal of this section is to prove Theorem \ref{C}. We will need the following Lemma.

\begin{lemma}\label{iso}
Let $\mathbb{P}$ be a weighted projective space of dimension $N\ge3$, and fix
integers $2\le n\le N$ and $t\ge0$. Then for all sufficiently divisible positive integers
$d_1,\ldots,d_{N-n}$, a general complete intersection
$X$ in $\mathbb{P}$ of multidegree
$(d_1,\ldots,d_{N-n})$ satisfies the following:

\begin{enumerate}
\item[(i)]
The natural map
\[
H^0(\mathbb{P},\mathcal{O}_{\mathbb{P}}(j))
\longrightarrow
H^0(X,\mathcal{O}_X(j))
\]
is an isomorphism for all $j\le t$.

\item[(ii)] $H^i(X,\mathcal{O}_X(j))=0$
for all $j\in\mathbb{Z}$ and $0<i<n$.
\end{enumerate}
\end{lemma}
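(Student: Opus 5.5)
The plan is to induct on the number of equations, cutting $X$ out one hypersurface at a time, and to track the graded pieces of the homogeneous coordinate rings together with the cohomology of the twisting sheaves on the intermediate complete intersections. Let $S=k[x_0,\dots,x_N]$ be the weighted polynomial ring, so $H^0(\mathbb P,\mathcal O_{\mathbb P}(j))=S_j$. Take the degrees $d_1,\dots,d_{N-n}$ divisible by $\operatorname{lcm}(a_0,\dots,a_N)$, so that each $\mathcal O_{\mathbb P}(d_i)$ is an ample line bundle (not merely a reflexive rank one sheaf). Choose them also large, $d_i>t$, and let $f_1,\dots,f_{N-n}$ be general forms of these degrees. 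Set $X_r=V(f_1,\dots,f_r)$, with $X_0=\mathbb P$.

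First I need genericity. A general member of the basepoint-free linear system $|\mathcal O_{\mathbb P}(d)|$, $d$ sufficiently divisible, is quasi-smooth, so by Bertini $f_1,\dots,f_{N-n}$ is a regular sequence in $S$ and each $X_r$ is quasi-smooth of dimension $N-r$. The main tool is the statement that the $\mathcal O$-modules $\mathcal O_{X_r}(j)=\widetilde{(S/(f_1,\dots,f_r))(j)}$ satisfy
\[
0\to\mathcal O_{X_{r}}(j-d_{r+1})\xrightarrow{f_{r+1}}\mathcal O_{X_r}(j)\to\mathcal O_{X_{r+1}}(j)\to0.
\]
This holds because $f_{r+1}$ is a nonzerodivisor, and the sheafification functor is exact on graded modules.

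Second, the induction. On $\mathbb P$ the standard computation for weighted projective space gives $H^i(\mathbb P,\mathcal O_{\mathbb P}(j))=0$ for $0<i<N$ and all $j$, and $\bigoplus_j H^0=S$. Suppose $X_r$ has dimension $m=N-r$, satisfies $H^i(X_r,\mathcal O(j))=0$ for $0<i<m$, and has $\bigoplus_j H^0(X_r,\mathcal O(j))=S/(f_1,\dots,f_r)$. Take the long exact sequence of the displayed short exact sequence. For $0<i<m-1$ we get $H^i(X_{r+1},\mathcal O(j))=0$. In degree zero, $H^1(X_r,\mathcal O(j-d_{r+1}))=0$ (since $m\ge3$ whenever we still cut), which gives $H^0(X_{r+1},\mathcal O(j))=S_j/(f_1,\dots,f_{r+1})_j$. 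This proves (ii) and the ring statement by induction. The induction stops at dimension $n\ge2$, so every step has $m\ge3$.

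Third, part (i). By the ring statement, $H^0(X,\mathcal O_X(j))=(S/(f_1,\dots,f_{N-n}))_j$. Since every $d_i>t\ge j$, the ideal has nothing in degree $j$, so the restriction map is an isomorphism for $j\le t$. For $j<0$ both sides vanish.

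The main obstacle is justifying the cohomology on $\mathbb P$ and the exactness of the sequences when the $\mathcal O(j)$ are only reflexive sheaves and not line bundles. I would handle this as follows:
\begin{itemize}
\item Use the quotient description $\mathbb P=(\mathbb A^{N+1}\setminus 0)/\mathbb G_m$, or equivalently \v{C}ech cohomology on the cover by the sets $x_i\neq0$, to compute $H^i(\mathbb P,\widetilde{M(j)})$ through local cohomology of graded $S$-modules: $H^i(\mathbb P,\widetilde{M}(j))=H^{i+1}_{\mathfrak m}(M)_j$ for $i\ge1$.
\item With this identification, the whole argument becomes the statement that $S/(f_1,\dots,f_r)$ is Cohen--Macaulay of dimension $N+1-r$. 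This is immediate from the regular sequence, so its local cohomology vanishes below the top degree.
\end{itemize}
This route avoids any sheaf-level subtleties, and in fact makes the divisibility hypothesis on the $d_i$ matter only for genericity (quasi-smoothness).
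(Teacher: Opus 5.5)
Your proposal is correct and follows essentially the paper's argument. You induct on the number of hypersurfaces cut, use $0\to\mathcal{O}_Y(j-d)\to\mathcal{O}_Y(j)\to\mathcal{O}_X(j)\to 0$ together with intermediate-cohomology vanishing (in particular $H^1(Y,\mathcal{O}_Y(j-d))=0$ since $\dim Y\ge 3$) to get (ii) and surjectivity on $H^0$, and use $d_i>t$ to see that nothing is lost in degrees $j\le t$. The differences are only cosmetic: you get exactness from $f_{r+1}$ being a nonzerodivisor on $S/(f_1,\dots,f_r)$ rather than from a general hypersurface avoiding the associated points of $\mathcal{O}_Y(j)$, and you carry the full section ring $S/(f_1,\dots,f_r)$ through the induction, which your Cohen--Macaulay/local cohomology remark packages neatly.
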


\begin{proof}
Fixing $N$, we do reverse induction on $n$. For $n=N$, we have $X=\mathbb{P}$, so $(i)$ is
clear and $(ii)$ follows from \cite[Section 1.4]{dolgachev2006weighted}. In general, write $X=Y\cap H,$
where $Y$ is a general complete intersection of multidegree
$(d_1,\ldots,d_{N-n-1})$, and $H$ is a general hypersurface in
$\mathbb{P}$ of degree $d_{N-n}$. As $H$ is general, it does not contain any associated point of the coherent
sheaf $\mathcal{O}_Y(j)$ for all $j\in\mathbb{Z}$. So, the local equation of
the effective Cartier divisor $X$ in $Y$ is a nonzero divisor in every stalk
of $\mathcal{O}_Y(j)$. Hence we have a short exact sequence
\begin{equation}\label{restriction seq}
0
\longrightarrow
\mathcal{O}_Y(j-d_{N-n})
\longrightarrow
\mathcal{O}_Y(j)
\longrightarrow
\mathcal{O}_X(j)
\longrightarrow
0.
\end{equation}

Using the long exact sequence in cohomology coming from \eqref{restriction seq} and induction, we
get $(ii)$, and also an exact sequence
\begin{equation}\label{h0}
0
\longrightarrow
H^0\!\left(Y,\mathcal{O}_Y(j-d_{N-n})\right)
\longrightarrow
H^0\!\left(Y,\mathcal{O}_Y(j)\right)
\longrightarrow
H^0\!\left(X,\mathcal{O}_X(j)\right)
\longrightarrow
0.
\end{equation}

Now, as $d_{N-n}$ is sufficiently divisible, we have $d_{N-n}>t\ge j,$
so $j-d_{N-n}<0.$
By induction,
\[
H^0\!\left(Y,\mathcal{O}_Y(j-d_{N-n})\right)
\cong
H^0\!\left(\mathbb{P},\mathcal{O}_{\mathbb{P}}(j-d_{N-n})\right)
=0,
\]
So, by \eqref{h0} and induction we get $(i)$.
\end{proof}
\textit{Proof of Theorem \ref{C}:}
Note that
\begin{align*}
H^0(X,\mathcal O_X(k))\otimes H^0(X,\mathcal O_X(l))
&=
H^0\bigl(X,H^0(X,\mathcal O_X(l))\otimes_{\mathbb C}\mathcal O_X(k)\bigr)\\
&=
H^0\Bigl(X,
\mathcal Hom_{\mathcal O_X}
\bigl(\mathcal O_X(-k),
H^0(X,\mathcal O_X(l))\otimes_{\mathbb C}\mathcal O_X
\bigr)
\Bigr)\\
&=
\operatorname{Hom}_{\mathcal O_X}
\bigl(
\mathcal O_X(-k),
H^0(X,\mathcal O_X(l))\otimes_{\mathbb C}\mathcal O_X
\bigr),
\end{align*}
and
\begin{align*}
H^0(X,\mathcal O_X(k+l))
&=
H^0\bigl(X,
\mathcal Hom_{\mathcal O_X}
(\mathcal O_X(-k),\mathcal O_X(l))
\bigr)\\
&=
\operatorname{Hom}_{\mathcal O_X}
(\mathcal O_X(-k),\mathcal O_X(l)).
\end{align*}

Via these identifications, $\eta$ is the map
induced by the evaluation map
\[
H^0(X,\mathcal O_X(l))
\otimes_{\mathbb C}\mathcal O_X
\longrightarrow
\mathcal O_X(l).
\]

Since the functor $Hom_{\mathcal{O}_X}\bigl(\mathcal{O}_X(-k),\,\underline{\phantom{X}}\bigr)$ is left exact, we have $$
\ker\eta
= Hom_{\mathcal O_X}
(\mathcal O_X(-k),M_{\mathcal O_X(l)}).
$$

As $\ker\eta\neq0$, we see that $\mathcal O_X(-k)$ is a subsheaf of
$M_{\mathcal O_X(l)}$.

Now
\[
\mu_H(\mathcal O_X(-k))
=
-k\,H^n,
\]
and
\[
\mu_H(M_{\mathcal O_X(l)})
=
-\frac{l}{h^0(X,\mathcal O_X(l))-1}\,H^n.
\]

Now by $(i)$
we see that $\mathcal O_X(-k)$ is a destabilizing subsheaf of
$M_{\mathcal O_X(l)}$. This proves the first part of the theorem.

Now we prove the second part. Fix $n\ge2$. Choose a sufficiently large integer $m$. Let
\[
a=4(m+4),\qquad
b=a+1,
\]
and choose a prime $p>2m$. Let $S=\mathbb C[x,y,z,w,u_1,v_1,\ldots,u_m,v_m]$, and $\mathbb P=\operatorname{Proj} S$, a weighted projective space
of dimension $2m+3$, where the degrees of the variables are given by
\begin{align*}
\deg x
&=pa
=p(4m+16),\\
\deg y
&=pb
=p(4m+17),\\
\deg z
&=p(a+b)
=p(8m+33),\\
\deg w
&=p(ab-a-b)
=p(16m^2+124m+239),\\
\deg u_i
&=p(2m+8)(4m+17)-i,\\
\deg v_i
&=p(2m+8)(4m+17)+i.
\end{align*}
It is easy to see that $\mathbb P$ is well-formed, this will also follow from the proof of the following Claim.
\begin{claim}\label{sing}
$\dim\operatorname{Sing}\mathbb P\le m$.
\end{claim}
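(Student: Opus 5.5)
The plan is to use the standard description of the singular locus of a weighted projective space $\mathbb P(a_0,\ldots,a_N)$. A point of $\mathbb P$ can be singular only if its stabilizer in $\mathbb G_m$ is nontrivial. If $\zeta$ is a primitive $q$-th root of unity for a prime $q$, the points fixed by $\zeta$ are exactly those whose coordinates $x_i$ with $q\nmid a_i$ all vanish. Hence
\[
\operatorname{Sing}\mathbb P\subseteq\bigcup_{q\text{ prime}}\Pi_q,\qquad \Pi_q:=\{x_i=0\ \text{for all } i\text{ with } q\nmid a_i\},
\]
and $\Pi_q$ is a weighted projective space of dimension $\#\{i: q\mid a_i\}-1$, or empty. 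So it suffices to show that for every prime $q$ at most $m+1$ of the $2m+4$ weights are divisible by $q$. This bound is at most $\dim\mathbb P-1=2m+2$, so no prime divides all but one weight. That is exactly well-formedness, which gives the parenthetical remark before the claim.

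I would count by cases on $q$. Write $C=p(2m+8)(4m+17)$, so $\deg u_i=C-i$ and $\deg v_i=C+i$ with $1\le i\le m$.

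\emph{Case $q=p$.} The weights of $x,y,z,w$ are all divisible by $p$. Since $p>2m\ge i$, we have $p\nmid i$, so $p$ divides none of $C\pm i$. Thus exactly $4\le m+1$ weights are divisible by $p$.

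\emph{Case $q=2$.} Here $a=4(m+4)$ is even and $b=a+1$ is odd. Hence $a+b$ is odd and $ab-a-b$ is even minus even minus odd, which is odd. Since $p$ is odd, among $x,y,z,w$ only $\deg x$ is even. Also $C$ is even, so $C\pm i$ is even iff $i$ is even. This gives $\lfloor m/2\rfloor$ even weights among the $u_i$ and the same number among the $v_i$. The total is at most $m+1$.

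\emph{Case $q\ge3$, $q\ne p$.} At most $4$ of the weights of $x,y,z,w$ are divisible by $q$. Further, $q\mid C-i$ forces $i\equiv C\pmod q$, which happens for at most $\lceil m/q\rceil\le m/3+1$ values of $i\in\{1,\ldots,m\}$, and likewise for the $v_i$. The total is at most $2m/3+6$, which is $\le m+1$ once $m\ge15$. This is allowed because $m$ was chosen sufficiently large.

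Combining the three cases gives $\dim\operatorname{Sing}\mathbb P\le\max_q\dim\Pi_q\le m$. The only delicate point is the first step: justifying that $\operatorname{Sing}\mathbb P$ is contained in the union of the $\Pi_q$ over primes $q$, which uses the quotient description of $\mathbb P$ (for example via \cite{dolgachev2006weighted}). The arithmetic itself is routine. If a sharper statement were needed, one could exploit the coprimality relations among $a$, $b$, $a+b$ and $ab-a-b$, but the crude bound already suffices.
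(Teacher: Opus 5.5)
Your proof is correct. Its skeleton matches the paper's. You reduce to showing that no prime $q$ divides more than $m+1$ of the $2m+4$ weights, and you dispose of $q=p$ by noting that $p$ divides only the weights of $x,y,z,w$.

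The counting for $q\neq p$ is done differently. The paper uses the pairwise coprimality of $a,b,a+b,ab-a-b$ to conclude that $q$ divides at most one weight among $x,y,z,w$. Hence $q$ divides at least $m+1$ of the $\deg u_i,\deg v_i$, and the paper then argues that two of these must differ by $1$. You instead count residue classes directly: parity for $q=2$, and the bound $\lceil m/q\rceil$ per family for $q\ge 3$, with the crude bound $4$ for $x,y,z,w$. This costs you $m\ge 15$, which is harmless since $m$ is chosen large.

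Your route has a real advantage. The paper asserts that the $\deg u_i,\deg v_i$ form $2m$ consecutive integers, but this is inaccurate. They are $C-m,\dots,C-1,C+1,\dots,C+m$ with $C=p(2m+8)(4m+17)$ itself skipped. For odd $m$, the $m+1$ values $C-m,C-m+2,\dots,C-1,C+1,\dots,C+m$ contain no two that differ by $1$. That configuration is excluded only because it forces $q=2$ (as $q$ divides $(C+1)-(C-1)$), while $C\pm1$ is odd. This is exactly your observation that $C$ is even, so $C\pm i$ is even if and only if $i$ is.

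You also use only the inclusion $\operatorname{Sing}\mathbb P\subseteq\bigcup_q\Pi_q$, which is all an upper bound requires. The paper instead asserts a description of the irreducible components themselves.
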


\begin{proof}
Suppose not. We want to get a contradiction.
The irreducible components of $\operatorname{Sing}\mathbb P$ are obtained by
choosing a subset $T'$ of $T:=\{x,y,z,w,u_1,v_1,\ldots,u_m,v_m\}$,
the degrees of whose elements have gcd $>1$, and setting the
variables in $T\setminus T'$ equal to $0$. Thus, each such irreducible
component has dimension $|T'|-1$. Hence there is one such $T'$ with $|T'|\ge m+2.$
Let $q$ be a prime dividing the degrees of every element of $T'$.

As $p>2m$, we have
\[
p\nmid\deg u_i,\qquad
p\nmid\deg v_i,
\]
for all $1\le i\le m$. Since $m+2>4$, we get $q\neq p$.

As $a$, $b$, $a+b$, and $ab-a-b$ are pairwise coprime, $q$ can divide at
most one of the degrees of $x,y,z,w$. Therefore,
\[
\left|T'\cap
\{u_1,v_1,\ldots,u_m,v_m\}\right|
\ge m+1.
\]

But
\[
\{\deg u_1,\deg v_1,\ldots,\deg u_m,\deg v_m\}
\]
is a set of $2m$ consecutive integers. Hence degrees of some two elements in $
T'\cap\{u_1,v_1,\ldots,u_m,v_m\}$
must differ by $1$. But they are both divisible by $q$, a contradiction.
\end{proof}

Let
\[
l=pab
=p(4m+16)(4m+17),
\]
and
\[
k=p(a+b)=p(8m+33).
\]

\begin{claim}\label{bs}
\begin{enumerate}
\item[(1)] $\dim \operatorname{Bs}|\mathcal O_{\mathbb P}(l)|\le m$.

\item[(2)] $
h^0(\mathbb P,\mathcal O_{\mathbb P}(l))
<
1+\frac{l}{k}.$
\end{enumerate}
\end{claim}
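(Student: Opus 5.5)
The plan is to compute $H^0(\mathbb P,\mathcal O_{\mathbb P}(l))$ explicitly. It is the degree-$l$ part of $S$, so it has a basis of monomials of weighted degree $l=pab$. Once that basis is known, (1) is read off from the common zero locus of these monomials, and (2) is a numerical comparison. Two identities drive the count:
\[
\deg u_i+\deg v_j=l+(j-i),\qquad \deg u_i,\deg v_i\in\bigl(\tfrac{l}{2}-m,\ \tfrac{l}{2}+m\bigr),
\]
using $2(2m+8)=a$. The degrees of $x,y,z,w$ are all divisible by $p$, while $\deg u_i\equiv -i$ and $\deg v_i\equiv i \pmod p$ are nonzero residues because $p>2m$.

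\emph{Step 1: monomials involving the $u_i,v_j$.} Each $u_i,v_j$ has degree greater than $l/3$, so a degree-$l$ monomial contains at most two of them.
\begin{itemize}
\item[] Exactly one: the degree is $\equiv \pm i\not\equiv 0 \pmod p$, while $l\equiv 0$, which is impossible.
\item[] Two $v$'s: the degree already exceeds $l$.
\item[] $u_iu_j$: the remaining degree is $i+j$, which lies in $(0,p)$ and so cannot be a sum of multiples of $p$.
\item[] $u_iv_j$: the remaining degree is $i-j$, which forces $i=j$ with nothing else in the monomial.
\end{itemize}
This gives exactly the $m$ monomials $u_iv_i$.

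\emph{Step 2: monomials in $x,y,z,w$ only.} Dividing by $p$, we need $\alpha a+\beta b+\gamma(a+b)+\delta(ab-a-b)=ab$ with nonnegative integers.
\begin{itemize}
\item[] $\delta\ge2$ is impossible, since $ab-2a-2b>0$ for our $a,b$.
\item[] $\delta=1$: we need $\alpha a+\beta b+\gamma(a+b)=a+b$, giving $(1,1,0)$ and $(0,0,1)$, that is, $xyw$ and $zw$.
\item[] $\delta=0$: substitute $\alpha'=\alpha+\gamma$, $\beta'=\beta+\gamma$ to get $\alpha'a+\beta'b=ab$. Coprimality of $a,b$ leaves only $(b,0)$ and $(0,a)$, which forces $\gamma=0$ and gives $x^b$ and $y^a$.
\end{itemize}
Hence $h^0(\mathbb P,\mathcal O_{\mathbb P}(l))=m+4$, with basis $x^b,\ y^a,\ xyw,\ zw,\ u_1v_1,\ldots,u_mv_m$.

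\emph{Step 3: conclusions.} For (2), we have $l/k=ab/(a+b)=(4m+16)(4m+17)/(8m+33)$, which exceeds $2m+7$. Therefore $1+l/k>2m+8>m+4$, and this holds for every $m\ge 0$.

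For (1), the base locus is the common zero set of the monomials above. This forces $x=y=0$, $zw=0$, and $u_iv_i=0$ for each $i$. The base locus is therefore the union of the coordinate linear subspaces given by $x=y=0$, one of $z,w$ set to zero, and one of $u_i,v_i$ set to zero for each $i$. Each such subspace leaves $1+m$ free homogeneous coordinates, so it has dimension $m$.

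The main obstacle is the completeness of the enumeration in Steps 1 and 2. I will carry out the mod-$p$ and size arguments for the mixed monomials carefully, and check the small inequalities ($ab-2a-2b>0$ and the three-variable degree bound) directly from $a=4(m+4)$ and $b=a+1$.
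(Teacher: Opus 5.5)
Your proposal is correct and follows essentially the same route as the paper. Both arguments show $S_l=\operatorname{span}\{x^b,y^a,zw,xyw,u_1v_1,\ldots,u_mv_m\}$ in three steps: bound the number of $u,v$ factors using $\deg u_i,\deg v_i>l/3$; rule out everything except $u_iv_i$ by a mod-$p$ argument with $p>2m$; and split the remaining $x,y,z,w$ monomials by $\delta\in\{0,1\}$, using coprimality of $a,b$. You also make explicit the numerical check $l/k>2m+7$ and the dimension count of the base locus, which the paper leaves implicit. One trivial slip: $\deg u_m=l/2-m$, so your interval should be closed, but this does not affect the $>l/3$ bound.
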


\begin{proof}
Both will follow once we show
\begin{equation}\label{sl}
S_l=
\operatorname{span}
\left\{
x^b,\,
y^a,\,
zw,\,
xyw,\,
u_1v_1,\,
u_2v_2,\,
\ldots,\,
u_mv_m
\right\}.
\end{equation}

Let $x^\alpha y^\beta z^\gamma w^\delta
\prod_i u_i^{s_i}v_i^{t_i}$
be a monomial of degree $l$. So,
\begin{equation}\label{deg}
\alpha\deg x+\beta\deg y+\gamma\deg z+\delta\deg w
+\sum_{i=1}^{m}\left(s_i\deg u_i+t_i\deg v_i\right)=l.
\end{equation}

Note that
\[
\deg u_i=\frac{l}{2}-i\ge \frac{l}{2}-m>\frac{l}{3},
\]
and similarly $\deg v_i>\frac{l}{3}$. So, using \eqref{deg},
\begin{align*}
l
&\ge
\sum_i\left(s_i\deg u_i+t_i\deg v_i\right)
>
\frac{l}{3}\sum_i(s_i+t_i).
\end{align*}
So,
\begin{equation}\label{s+t}
\sum_i(s_i+t_i)\le 2.
\end{equation}

Also, reducing \ref{deg} modulo $p$, we get
\begin{equation}\label{mod}
\sum_{i=1}^m i(t_i-s_i)\equiv 0 \pmod p.
\end{equation}

Suppose not all $s_i,t_i$ are zero. As $p>2m$, from \eqref{s+t} and \eqref{mod} we see the only possibility is that
$s_{i_0}=t_{i_0}=1$ for some $i_0$, and
$s_i=t_i=0$ for all $i\neq i_0$. So, $M=u_iv_i.$

Now suppose $s_i=t_i=0$ for all $i$. Note that
$\deg w>\frac{l}{2}$ as $m$ is large. So, by \eqref{deg},
\[
l\ge \delta\deg w>\frac{\delta l}{2}.
\]
Hence $\delta<2$, that is, $\delta=0$ or $1$.

If $\delta=1$, \eqref{deg} gives
\[
\alpha(4m+16)+\beta(4m+17)+\gamma(8m+33)=8m+33,
\]
forcing $(\alpha,\beta,\gamma)=(1,1,0)
\text{ or }
(0,0,1).$
Hence
\[
M=xyw \quad\text{or}\quad zw.
\]

If $\delta=0$, \eqref{deg} gives
\[
(\alpha+\gamma)a+(\beta+\gamma)b=ab.
\]
As $\gcd(a,b)=1$, we get
\[
a\mid (\beta+\gamma)
\quad\text{and}\quad
b\mid (\alpha+\gamma).
\]
This forces $(\alpha+\gamma,\beta+\gamma)=(b,0)
\text{ or }
(0,a).$
Hence $\gamma=0$ and $(\alpha,\beta)=(b,0)
\text{ or }
(0,a),$
so
\[
M=x^b \quad\text{or}\quad y^a.
\]

This proves \eqref{sl}, hence the Claim follows.
\end{proof}
Now choose positive integers $d_1,\ldots,d_{2m+3-n}$ sufficiently divisible, and let
$X$ be a very general complete intersection in $\mathbb{P}$ of multidegree
$(d_1,\ldots,d_{2m+3-n})$. As $m\gg0$, we have $2m+3-n>m$. So by Claim \ref{sing}, $X\cap \operatorname{Sing}\mathbb{P}=\varnothing,$
and hence $X$ is smooth by Bertini's theorem. Each $\mathcal{O}_X(i)$ is a line
bundle on $X$, and by repeated application of
\cite[Theorem 1]{ravindra2005grothendieck}, and
\cite[Theorem 1]{ravindra2009noether}, we get $\operatorname{Pic}X=\mathbb{Z}\cdot\mathcal{O}_X(1).$
Also, by Claim \ref{bs}, $X\cap \operatorname{Bs}|\mathcal{O}_{\mathbb{P}}(l)|=\varnothing,$
so $\mathcal{O}_X(l)$ is globally generated.

By Lemma \ref{iso}, we have
\[
H^0(X,\mathcal{O}_X(i))
\cong
H^0(\mathbb{P},\mathcal{O}_{\mathbb{P}}(i))
\cong
S_i,
\]
for all $i\le k+l$. So by Claim~2,
\[
h^0(X,\mathcal{O}_X(l))
<
1+\frac{l}{k}.
\]
Hence $(i)$ of Theorem \ref{C} holds. To show that $(ii)$ of Theorem \ref{C} also holds, it suffices to show that the
multiplication map $\eta_1:S_k\otimes S_l\longrightarrow S_{k+l}$
is not injective. Note that
\[
xy,\ z\in S_k,
\qquad\text{and}\qquad
zw,\ xyw\in S_l.
\]
So
\[
Q:=xy\otimes zw-z\otimes xyw
\]
is a nonzero element of $S_k\otimes S_l$, and $\eta_1(Q)=0.$
Hence $(ii)$ of Theorem \ref{C} also holds.

\begin{remark}\label{answer}
In this remark we follow the convention of the projectivization of a vector bundle as in \cite[Chapter 2, Section 7]{hartshorne2013algebraic}. For a vector bundle $E$, its dual is denoted by $E^*$.

If $L$ is a globally generated ample line bundle on a smooth projective variety
$X$ of Picard rank $1$ and dimension $n\ge2$, then $Y:=\mathbb{P}_X(M_L^{*})$
has a nonconstant map to $\mathbb{P}\!\left(H^0(X,M_L^{*})\right),$
given by the complete linear system $|\mathcal{O}_Y(1)|$. As
\[
\dim Y
=
n+h^0(X,L)-2
>
h^0(X,L)-1
=
\dim \mathbb{P}\!\left(H^0(X,M_L^{*})\right),
\]
the contractions of both rays of the Mori cone of curves in $Y$ are of fibre type.
Hence the nef and pseudoeffective cone of $Y$ are the same, that is, $Y$ is $1$-homogeneous in the sense of
\cite{fulger2022positivity}.

Thus, Theorem \ref{C} answers \cite[Question 2(1)]{fulger2022positivity} affirmatively. The other questions \cite[Question 2(2)]{fulger2022positivity} and \cite[Question 2(3)]{fulger2022positivity} are relatively easier. For \cite[Question 2(2)]{fulger2022positivity}, one can take $X=\mathbb{P}^2$, and 
$\mathcal{E}=\mathcal{O}_{\mathbb{P}^2}\oplus M_{\mathcal{O}_{\mathbb{P}^2}(2)}^*,$
and apply \cite[Theorem 7.1]{bansal2024extremal}, or argue as above. See \cite{liu2026slope} for other
examples. For  \cite[Question 2(3)]{fulger2022positivity}, one can take $
X=\mathbb{P}^5
\text{ and }
\mathcal{E}
=
\mathcal{O}_{\mathbb{P}^5}^{\oplus 3}
\oplus
\Omega_{\mathbb{P}^5}(2),$
and argue as above.
\end{remark}
\begin{remark}
    We were informed that Jihao Liu also independently constructed the following counterexample giving negative answer to Question \ref{syzygy bundle},  using Chatgpt 5.6-Sol-Ultra and the Danus system:
    $X$= a general complete intersection $3$-fold in the weighted projective space $\mathbb{P}(2,2,2,2,9,9,9,9)$ of multidegree $(18,18,18,18)$, and $L=\mathcal{O}_X(11).$ It has a destabilizing subbundle isomorphic to $M_{\mathcal{O}_X(2)}^{\oplus 4}$.
\end{remark}
\section{Acknowledgement}
 I thank János Kollár, Robert Lazarsfield and Justin Lacini for insightful discussions, and Shivam Vats and Ashima Bansal for  making me interested in \cite[Question 2]{fulger2022positivity}.
\printbibliography

@article{fulger2022positivity,
  title={Positivity vs. slope semistability for bundles with vanishing discriminant},
  author={Fulger, Mihai and Langer, Adrian},
  journal={Journal of Algebra},
  volume={609},
  pages={657--687},
  year={2022},
  publisher={Elsevier}
}

@article{jiang2025stability,
  title={Stability of syzygy bundles on varieties of Picard number one},
  author={Jiang, Chen and Ren, Peng},
  journal={International Mathematics Research Notices},
  volume={2025},
  number={8},
  pages={rnaf098},
  year={2025},
  publisher={Oxford University Press}
}

@article{ein2013stability,
  title={Stability of syzygy bundles on an algebraic surface},
  author={Ein, Lawrence and Lazarsfeld, Robert and Mustopa, Yusuf},
  journal={Mathematical Research Letters},
  volume={20},
  number={1},
  pages={73--80},
  year={2013},
  publisher={International Press of Boston, Inc. Somerville, MA 02143, USA}
}

@article{ein1992stability,
  title={Stability and restrictions of Picard bundles, with an application to the normal bundles of elliptic curves},
  author={Ein, Lawrence and Lazarsfeld, Robert},
  journal={Complex projective geometry (Trieste, 1989/Bergen, 1989)},
  volume={179},
  pages={149--156},
  year={1992},
  publisher={Cambridge Univ. Press Cambridge}
}

@article{camere2008stability,
  title={About the stability of the tangent bundle restricted to a curve},
  author={Camere, Chiara},
  journal={Comptes Rendus. Math{\'e}matique},
  volume={346},
  number={7-8},
  pages={421--426},
  year={2008}
}

@article {MR4728292,
    AUTHOR = {Rekuski, Nick},
     TITLE = {Stability of kernel sheaves associated to rank one
              torsion-free sheaves},
   JOURNAL = {Math. Z.},
  FJOURNAL = {Mathematische Zeitschrift},
    VOLUME = {307},
      YEAR = {2024},
    NUMBER = {1},
     PAGES = {Paper No. 2, 18},
      ISSN = {0025-5874,1432-1823},
   MRCLASS = {14D20 (14F06)},
  MRNUMBER = {4728292},
MRREVIEWER = {R.\ Parthasarathi},
       DOI = {10.1007/s00209-024-03475-y},
       URL = {https://doi.org/10.1007/s00209-024-03475-y},
}

@article{camere2012stability,
  title={About the stability of the tangent bundle of restricted to a surface},
  author={Camere, Chiara},
  journal={Mathematische Zeitschrift},
  volume={271},
  number={1},
  pages={499--507},
  year={2012},
  publisher={Springer}
}

@article{mukherjee2022note,
  title={A note on stability of syzygy bundles on Enriques and bielliptic surfaces},
  author={Mukherjee, Jayan and Raychaudhury, Debaditya},
  journal={Proceedings of the American Mathematical Society},
  volume={150},
  number={09},
  pages={3715--3724},
  year={2022}
}

@article{caucci2021stability,
  title={Stability of syzygy bundles on abelian varieties},
  author={Caucci, Federico and Lahoz, Mart{\'\i}},
  journal={Bulletin of the London Mathematical Society},
  volume={53},
  number={4},
  pages={1030--1036},
  year={2021},
  publisher={Wiley Online Library}
}

@book{hartshorne2013algebraic,
  title={Algebraic geometry},
  author={Hartshorne, Robin},
  year={2013},
  publisher={Springer Science \& Business Media}
}

@book{kobayashi2014differential,
  title={Differential geometry of complex vector bundles},
  author={Kobayashi, Shoshichi},
  year={2014},
  publisher={Princeton University Press}
}

@article{bansal2024extremal,
  title={Extremal Contraction of Projective Bundles},
  author={Bansal, Ashima and Sarkar, Supravat and Vats, Shivam},
  journal={arXiv preprint arXiv:2409.05091},
  year={2024}
}

@article{liu2026slope,
  title={A slope-unstable bundle on a surface with 1-homogeneous projectivization},
  author={Liu, Jihao},
  journal={arXiv preprint arXiv:2607.04376},
  year={2026}
}

@article{ravindra2005grothendieck,
  title={The Grothendieck-Lefschetz theorem for normal projective varieties},
  author={Ravindra, Girivaru V and Srinivas, Vasudevan},
  journal={arXiv preprint math/0511134},
  year={2005}
}

@article{ravindra2009noether,
  title={The Noether--Lefschetz theorem for the divisor class group},
  author={Ravindra, Girivaru V and Srinivas, Vasudevan},
  journal={Journal of Algebra},
  volume={322},
  number={9},
  pages={3373--3391},
  year={2009},
  publisher={Elsevier}
}

@article{pizzato2017effective,
  title={Effective nonvanishing for Fano weighted complete intersections},
  author={Pizzato, Marco and Sano, Taro and Tasin, Luca},
  journal={Algebra \& Number Theory},
  volume={11},
  number={10},
  pages={2369--2395},
  year={2017},
  publisher={Mathematical Sciences Publishers}
}

@inproceedings{dolgachev2006weighted,
  title={Weighted projective varieties},
  author={Dolgachev, Igor},
  booktitle={Group Actions and Vector Fields: Proceedings of a Polish-North American Seminar Held at the University of British Columbia January 15--February 15, 1981},
  pages={34--71},
  year={2006},
  organization={Springer}
}

@article{flenner1984restrictions,
  title={Restrictions of semistable bundles on projective varieties},
  author={Flenner, Hubert},
  journal={Commentarii Mathematici Helvetici},
  volume={59},
  number={1},
  pages={635--650},
  year={1984},
  publisher={Springer}
}

@article{ballico1994restriction,
  title={On the restriction of vector bundles to good families of curves},
  author={Ballico, E},
  journal={Annali di Matematica Pura ed Applicata},
  volume={167},
  number={1},
  pages={101--116},
  year={1994},
  publisher={Springer}
}

@article{brenner2008looking,
  title={Looking out for stable syzygy bundles},
  author={Brenner, Holger},
  journal={Advances in Mathematics},
  volume={219},
  number={2},
  pages={401--427},
  year={2008},
  publisher={Elsevier}
}

@article{ehresmann1950connexions,
  title={Les connexions infinit{\'e}simales dans un espace fibr{\'e} diff{\'e}rentiable},
  author={Ehresmann, Charles},
  year={1950},
  publisher={se}
}

@book{lazarsfeld2017positivity,
  title={Positivity in algebraic geometry I: Classical setting: line bundles and linear series},
  author={Lazarsfeld, Robert K},
  volume={48},
  year={2017},
  publisher={Springer}
}
\vspace{40pt}
\begin{flushleft}
{\scshape Department of Mathematics, Fine Hall, Princeton University, Princeton, NJ 700108, USA}.

{\fontfamily{cmtt}\selectfont
\textit{Email address: ss6663@princeton.edu} }
\end{flushleft}
\end{document}